\newtheorem{theorem}{Theorem}[section]
\newtheorem{lemma}[theorem]{Lemma}
\newtheorem{proposition}[theorem]{Proposition}
\newtheorem{corollary}[theorem]{Corollary}
\newcommand{\Q}{\mathbb Q}
\newcommand{\R}{\mathbb R}
\newcommand{\C}{\mathbb C}
\newcommand{\on}{\operatorname}
\author{Artur Bartoszewicz}
\address{Institute of Mathematics, \L\'od\'z University of Technology, W\'olcza\'nska 215, 93-005
\L\'od\'z, Poland}
\email {arturbar@p.lodz.pl}
\author{Szymon G\l \c ab}
\address{Institute of Mathematics, \L\'od\'z University of Technology, W\'olcza\'nska 215, 93-005
\L\'od\'z, Poland}
\email {szymon.glab@p.lodz.pl}
\thanks{The authors have been supported by the Polish Ministry of Science and Higher Education Grant No.  N N201 414939 (2010--2013).}
\title[Additivity and lineability]{Additivity and lineability in vector spaces}
\subjclass[2010]{Primary: 15A03; Secondary: 28A20, 26A15} 
\keywords{lineability, additivity}
\date{}
\begin{document}

\begin{abstract} 
G\'amez-Merino, Munoz-Fern\'andez and Seoane-Sep\'ulveda proved that if additivity $\mathcal A(\mathcal F)>\mathfrak c$, then $\mathcal F$ is $\mathcal A(\mathcal F)$-lineable where $\mathcal F\subseteq\R^\R$. They asked if $\mathcal A(\mathcal F)>\mathfrak c$ can be weakened. We answer this question in negative. Moreover, we introduce and study the notions of homogeneous lineability number and lineability number of subsets of linear spaces.  
\end{abstract}

\maketitle

\section{Introduction}

A subset $M$ of a linear space $V$ is $\kappa$-lineable if $M\cup\{0\}$ contains a linear subspace of dimension $\kappa$ (see  \cite{APGS},  \cite{ACPSS}, \cite{AGS}, \cite{GMMFSSS}, \cite{GMMFSSContinuityMonthly}, \cite{GPPSS}). If additionally $V$ is a linear algebra, then in a similar way one can define albegrability of subsets of $V$ (see \cite{ACPSS}, \cite{APSS}, \cite{ASS}, \cite{BBG}, \cite{BGPS}, \cite{BGP}, \cite{BG}, \cite{BG1}, \cite{BQ}, \cite{GMSS}, \cite{GPMSS}, \cite{GPPSS}). If $V$ is a linear topological space, then $M\subseteq V$ is called spaceable (dense-lineable) if $M$ contains a closed infinitely dimensional subspace (dense subspace) (see \cite{BGo}, \cite{BGOC}, \cite{GKP}). The lineability problem of subsets of linear spaces of functions or sequences have been studied by many authors. The most common way of proving $\kappa$-lineability is to construct a set of cardinality $\kappa$ of linearly independent elements of $V$ and to show that any linear combination of them is in $M$.   

We will concentrate on a non-constructive method in lineability. Following the paper \cite{GMMFSS}, we will consider a connection between lineability and additivity. This method does not give a specific large linear space, but ensures that such a space exists.

Let $\mathcal F\subseteq\R^\R$. The additivity of $\mathcal F$ is defined as the following cardinal number
$$
\mathcal A(\mathcal F)=\min(\{\vert F\vert:F\subseteq\R^\R,\;\varphi+F\nsubseteq\mathcal F\text{ for every }\varphi\in\R^\R\}\cup\{(2^\mathfrak{c})^+\}).
$$
The notion of additivity was introduced by Natkaniec in \cite{N1} and then studied by several authors \cite{CM}, \cite{CMa}, \cite{CN}, \cite{CR} and \cite{N2}. 

A family $\mathcal F\subseteq\R^\R$ is called star-like if $a\mathcal F\subseteq \mathcal F$ for all $a\in\R\setminus\{0\}$. G\'amez-Merino, Munoz-Fern\'andez and Seoane-Sep\'ulveda proved the in \cite{GMMFSS} following result which connects the lineability and the additivity of star-like families. 
\begin{theorem}\label{thmGMMFSS}
Let $\mathcal F\subseteq\R^\R$ be star-like. If $\mathfrak{c}<\mathcal A(\mathcal F)\leq 2^\mathfrak{c}$, then $\mathcal F$ is $\mathcal A(\mathcal F)$-lineable. 
\end{theorem}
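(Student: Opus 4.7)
Set $\kappa := \mathcal A(\mathcal F)$. The goal is to produce a linearly independent family $\{f_\alpha : \alpha < \kappa\} \subseteq \R^\R$ every nonzero linear combination of which lies in $\mathcal F$; its span will be the required $\kappa$-dimensional subspace of $\mathcal F \cup \{0\}$.

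I would build the family by transfinite recursion, maintaining the invariant that $V_\alpha := \operatorname{span}\{f_\beta : \beta < \alpha\}$ satisfies $V_\alpha \setminus \{0\} \subseteq \mathcal F$. Since $|\alpha| < \kappa$ and $\mathfrak c < \kappa$, one has $|V_\alpha| \leq \max(|\alpha|, \mathfrak c) < \kappa$, so the definition of additivity applies to any subset of $\R^\R$ of size at most $|V_\alpha|$. Combined with star-likeness, the identity $cf_\alpha + v = c(f_\alpha + v/c)$ (for $c \neq 0$) reduces the task at stage $\alpha$ to finding $f_\alpha \in \R^\R$ such that $f_\alpha \notin V_\alpha$ and $f_\alpha + V_\alpha \subseteq \mathcal F$; indeed, the case $c = 0$ is handled by the inductive invariant, the case $v/c = 0$ by star-likeness of $\mathcal F$ alone, and the generic case by star-likeness applied to $f_\alpha + v/c \in \mathcal F$.

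The main obstacle is the linear-independence requirement $f_\alpha \notin V_\alpha$: the $\varphi$ handed over by additivity might happen to lie in $V_\alpha$. My proposed workaround is to pick any $h \in \R^\R \setminus V_\alpha$ — possible since $|V_\alpha| < \kappa \leq 2^{\mathfrak c} = |\R^\R|$ — and apply additivity to the doubled set $V_\alpha \cup (h + V_\alpha)$, which still has size strictly less than $\kappa$. The resulting $\varphi$ satisfies both $\varphi + V_\alpha \subseteq \mathcal F$ and $(\varphi + h) + V_\alpha \subseteq \mathcal F$. If $\varphi \notin V_\alpha$, set $f_\alpha := \varphi$; otherwise $\varphi \in V_\alpha$, so $f_\alpha := \varphi + h$ lies outside $V_\alpha$ (because $h$ does) and still satisfies $f_\alpha + V_\alpha \subseteq \mathcal F$ by the second shifted condition.

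Limit stages need no work: both linear independence of $\{f_\beta : \beta < \alpha\}$ and $V_\alpha \setminus \{0\} \subseteq \mathcal F$ depend only on finite subfamilies of previously chosen vectors, hence follow from the predecessors. The recursion therefore runs all the way to $\kappa$ and yields the desired $\kappa$-dimensional subspace inside $\mathcal F \cup \{0\}$, establishing $\mathcal A(\mathcal F)$-lineability.
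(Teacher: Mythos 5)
Your proposal is correct and is essentially the paper's own argument: the doubling trick of applying additivity to $V_\alpha\cup(h+V_\alpha)$ so that at least one of $\varphi,\varphi+h$ falls outside $V_\alpha$ is exactly Lemma \ref{lem1}, and the use of star-likeness via $cf_\alpha+v=c(f_\alpha+v/c)$ to absorb the new one-dimensional direction is exactly the extension step in Theorem \ref{generalThm}. The only difference is that you run a transfinite recursion of length $\kappa$ where the paper takes a maximal subspace by Zorn's lemma and derives a contradiction, which is an immaterial repackaging of the same idea.
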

The authors noted that there is a star-like family $\mathcal F$ such that $\mathcal A(\mathcal F)=2$ and $\mathcal F$ is not 2-lineable. They asked if the above result is true for $2<\mathcal A(\mathcal F)\leq\mathfrak{c}$. We will answer this question in negative. This will show that Theorem \ref{thmGMMFSS} is sharp.  

Let us observe that the notion of additivity can be stated for abelian groups as follows. If $(G,+)$ is an abelian group and $\mathcal F\subseteq G$, then the additivity of $\mathcal F$ is the cardinal number
$$
\mathcal A(\mathcal F)=\min(\{\vert F\vert:F\subseteq G\text{ and }\forall\varphi\in G(\varphi+F\nsubseteq\mathcal F)\}\cup\{\vert G\vert^+\}).
$$
On the other hand, the lineability is a natural notion in vector spaces. We say that a set $\mathcal F\subseteq V$, where $V$ is a linear space, is $\lambda$-lineable if there exists a subspace $W$ of $V$ such that $W\subseteq\mathcal F\cup\{0\}$ and $\dim W=\lambda$. Now, the lineability  $\mathcal L(\mathcal F)$ is the cardinal number
$$
\mathcal L(\mathcal F)=\min\{\lambda:\mathcal F\text{ is not }\lambda\text{-lineable}\}.
$$
Clearly $\mathcal L(\mathcal F)$ is a cardinal number less or equal to $(\dim V)^+$ and it can take any value between 1 and $(\dim V)^+$ -- see Proposition \ref{PropertiesOfHomLin}.

%
%

\section{results}

The following Lemma \ref{lem1} and Theorem \ref{generalThm} are generalizations of \cite[Lemma 2.2 and Theorem 2.4]{GMMFSS} in the settings of abelian groups and vector spaces over infinite fields, respectively. Short proofs of this facts are essentially the same as those in \cite{GMMFSS}, but our presentation is more general and from the proof of Theorem \ref{generalThm} we extract a new notion of lineability, namely \emph{homogeneus lineability}. Moreover the authors of \cite{GMMFSS} claimed that Theorem \ref{thmGMMFSS} held true also in the case $\mathfrak{c}=\mathcal A(\mathcal F)$. However we will show (see Theorem \ref{2-lin} and Theorem \ref{mainThm}) that this is not true. Let us remark that all examples of families $\mathcal F\subseteq\R^\R$ discussed in \cite{GMMFSS} have additivity $\mathcal A(\mathcal F)$ greater than $\mathfrak c$, so the described mistake has almost no impact on the value of this nice paper.

\begin{lemma}\label{lem1}
Let $(G,+)$ be an abelian group. Assume that $F$ is a subgroup of $G$ and $\mathcal F\subseteq G$ is such that 
\begin{equation}\label{eq1}
2\vert F\vert<\mathcal A(\mathcal F).
\end{equation}
Then there is $g\in\mathcal F\setminus F$ with $g+F\subseteq\mathcal F$. That means actually that some coset of $F$ different from $F$ is contained in $\mathcal F$. 
\end{lemma}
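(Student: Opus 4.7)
The plan is to exploit the factor $2$ in the hypothesis $2|F|<\mathcal{A}(\mathcal{F})$ in order to capture two distinct translates of $F$ simultaneously, so that at least one of them is forced to be a coset different from $F$ itself. Throughout I work under the tacit (and necessary) assumption $F\subsetneq G$; if $F=G$ the desired conclusion refers to the empty set $\mathcal{F}\setminus F$, so there is nothing to prove there.

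First I would pick any $g_0\in G\setminus F$ and set $F':=F\cup(g_0+F)$. Since $F$ is a subgroup and $g_0\notin F$, the cosets $F$ and $g_0+F$ are disjoint, so $|F'|\le 2|F|<\mathcal{A}(\mathcal{F})$. By the very definition of $\mathcal{A}(\mathcal{F})$, a set of cardinality strictly less than $\mathcal{A}(\mathcal{F})$ cannot witness the minimum, i.e.\ some $\varphi\in G$ must satisfy $\varphi+F'\subseteq\mathcal{F}$. Unpacking the union, this gives two cosets of $F$ contained in $\mathcal{F}$, namely $\varphi+F$ and $(\varphi+g_0)+F$.

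These two cosets are distinct: if $\varphi+F=\varphi+g_0+F$, then $g_0\in F$, contradicting the choice of $g_0$. Hence at least one of them, call it $H$, is different from $F$. Take any $g\in H$; then $g+F=H\subseteq\mathcal{F}$, and since $0\in F$ we have $g\in g+F\subseteq\mathcal{F}$, while $g\notin F$ (otherwise $H=F$). This produces the required $g\in\mathcal{F}\setminus F$ with $g+F\subseteq\mathcal{F}$.

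The only genuinely subtle point is understanding why the factor $2$ is needed. With merely $|F|<\mathcal{A}(\mathcal{F})$ one would still obtain a single translate $\varphi+F\subseteq\mathcal{F}$, but it could accidentally coincide with $F$ whenever $\varphi\in F$, and nothing would force a nontrivial coset. Enlarging the test set to $F\cup(g_0+F)$ breaks this degeneracy cost-free, because the two resulting translates $\varphi+F$ and $\varphi+g_0+F$ are automatically different cosets and therefore cannot both be $F$. I expect this to be the main (and essentially only) conceptual step; the rest of the argument is just book-keeping with cosets.
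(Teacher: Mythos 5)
Your proof is correct and follows essentially the same route as the paper's: both form the doubled test set $F\cup(h+F)$ for some $h\notin F$, invoke the definition of $\mathcal A(\mathcal F)$ to get a translate of it inside $\mathcal F$, and then observe that the two resulting cosets of $F$ are distinct, so at least one differs from $F$. The only cosmetic difference is that the paper phrases the last step in terms of the elements $g$ and $g+h$ rather than the cosets themselves.
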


\begin{proof}
Let $h\in G\setminus F$ and put $F_h=(h+F)\cup F$. Then $\vert F_h\vert=2\vert F\vert$. By \eqref{eq1} there is $g\in G$ such that $g+F_h\subseteq\mathcal F$. Thus $g+F\subseteq\mathcal F$, $(g+h)+F\subseteq\mathcal F$ and $0\in F$, and consequently $g\in\mathcal F$ and $g+h\in\mathcal F$. It is enough to show that $g\notin F$ or $g+h\notin F$. Suppose to the contrary that $g,g+h\in F$. Then $h=(g+h)-g\in F$ which is a contradiction.
 \end{proof}

Let us remark that if $\mathcal A(\mathcal F)$ is an infinite cardinal, then the condition $\vert F\vert<\mathcal A(\mathcal F)$ implies condition \eqref{eq1}. 

Assume that $V$ is a vector space, $A\subseteq V$ and $f_1,\dots,f_n\in V$. Fix the following notation $[A]=\on{span}(A)$ and $[f_1,\dots,f_n]=\on{span}(\{f_1,\dots,f_n\})$.

\begin{theorem}\label{generalThm}
Let $V$ be a vector space over a field $\mathbb{K}$ with $\omega\leq\vert\mathbb{K}\vert=\mu<\dim V$. Assume that $\mathcal F\subseteq V$ is star-like, that is $a\mathcal F\subseteq\mathcal F$ for every $a\in\mathbb K\setminus\{0\}$, and 
\begin{equation}\label{eq2}
\mu<\mathcal A(\mathcal F)\leq\dim V.
\end{equation}
Then $\mathcal F\cup\{0\}$ is $\mathcal A(\mathcal F)$-lineable, in symbols $\mathcal L(\mathcal F)>\mathcal A(\mathcal F)$. Moreover, any linear subspace $Y$ of $V$ contained in $\mathcal F$ of dimension less than $\mathcal A(\mathcal F)$ can be extended to $\mathcal A(\mathcal F)$-dimensional subspace also contained in $\mathcal F$.  
\end{theorem}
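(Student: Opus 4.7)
The plan is to build the $\mathcal{A}(\mathcal{F})$-dimensional subspace by transfinite recursion, extending $Y$ one dimension at a time using Lemma \ref{lem1} at each successor step. Write $\kappa = \mathcal{A}(\mathcal{F})$. I set $Y_0 = Y$; at limit stages $\alpha < \kappa$, I take unions $Y_\alpha = \bigcup_{\beta < \alpha} Y_\beta$; and I continue until $W = \bigcup_{\alpha < \kappa} Y_\alpha$ has dimension $\kappa$.

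The successor step is the heart of the construction. Given $Y_\alpha \subseteq \mathcal{F}\cup\{0\}$ with $\dim Y_\alpha < \kappa$, I first check the cardinality hypothesis of Lemma \ref{lem1} applied to the subgroup $F = Y_\alpha$. Since $|\mathbb{K}| = \mu$ is infinite, $|Y_\alpha| = \max(\mu, \dim Y_\alpha)$, and both $\mu < \kappa$ and $\dim Y_\alpha < \kappa$, so $|Y_\alpha| < \kappa$; as $\kappa > \mu \geq \omega$, also $2|Y_\alpha| < \kappa = \mathcal{A}(\mathcal{F})$. Lemma \ref{lem1} then produces $g_\alpha \in \mathcal{F} \setminus Y_\alpha$ with $g_\alpha + Y_\alpha \subseteq \mathcal{F}$. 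I define $Y_{\alpha+1} = [Y_\alpha \cup \{g_\alpha\}]$; since $g_\alpha \notin Y_\alpha$, the dimension strictly increases.

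The key verification is $Y_{\alpha+1} \subseteq \mathcal{F} \cup \{0\}$. An arbitrary element has the form $y + a g_\alpha$ with $y \in Y_\alpha$, $a \in \mathbb{K}$. If $a = 0$, it lies in $Y_\alpha \subseteq \mathcal{F} \cup \{0\}$. If $a \neq 0$, I rewrite
\[
y + a g_\alpha = a(a^{-1} y + g_\alpha),
\]
noting that $a^{-1} y \in Y_\alpha$ (so $a^{-1} y + g_\alpha \in g_\alpha + Y_\alpha \subseteq \mathcal{F}$), and then star-likeness of $\mathcal{F}$ gives $a(a^{-1}y + g_\alpha) \in \mathcal{F}$. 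This is where both the subspace structure of $Y_\alpha$ (to absorb $a^{-1}$) and star-likeness (to reintroduce $a$) are indispensable; it is the conceptual step that upgrades a coset conclusion into a subspace conclusion.

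For limit $\alpha < \kappa$, the union $Y_\alpha$ is a subspace contained in $\mathcal{F} \cup \{0\}$ with $\dim Y_\alpha = \dim Y_0 + |\alpha| < \kappa$ (since $\dim Y_0 < \kappa$ and $|\alpha| < \kappa$), so the induction continues. The only potential obstacle is the bookkeeping on cardinalities ensuring Lemma \ref{lem1} applies at every successor stage, but this is controlled entirely by the assumption $\mu < \kappa$ together with the dimension count above. After $\kappa$ steps, $W$ has dimension $\kappa$, contains $Y$, and lies in $\mathcal{F} \cup \{0\}$, proving both assertions.
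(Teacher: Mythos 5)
Your proof is correct and follows essentially the same route as the paper: both hinge on Lemma \ref{lem1} (applicable because $\mathcal A(\mathcal F)>\mu\geq\omega$ makes the $2\vert F\vert$ condition automatic for small subspaces) together with the identity $y+ag=a(g+a^{-1}y)$ and star-likeness to upgrade the coset conclusion to a subspace extension. The only difference is organizational --- you iterate the one-dimension extension by transfinite recursion, while the paper takes a maximal subspace via Zorn's lemma and derives a contradiction from the same extension step; these are interchangeable.
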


\begin{proof}
Let $Y$ be a linear subspace of $V$ with $Y\subseteq\mathcal F\cup\{0\}$. 
Let $X$ be a maximal element, with respect to inclusion, of the family
$$
\{X:Y\subseteq X\subseteq\mathcal F\cup\{0\},\text{ }X\text{ is a linear subspace of }V\}. 
$$
Suppose to the contrary that $\vert X\vert<\mathcal A(\mathcal F)$. Then by Lemma \ref{lem1} there is $g\in\mathcal F\setminus X$ with $g+X\subseteq\mathcal F$. Let $Z=[g]+X$ and take any $z\in Z\setminus X$. Then there is $x\in X$ and nonzero $a\in\mathbb{K}$ with $z=ag+x=a(g+x/a)\in\mathcal F$. Since $X\subseteq\mathcal F$ we obtain that $Z\subseteq\mathcal F$. This contradicts the maximality of $X$. Therefore $\mathcal A(\mathcal F)\leq\vert X\vert<\mathcal L(\mathcal F)$. 
\end{proof}

The assertion of Theorem \ref{generalThm} leads us to the following definition of a new cardinal function. We define the \textbf{homogeneous lineability number} of $\mathcal F$ as the following cardinal number
$$
\mathcal {HL}(\mathcal F)=\min(\{\lambda:\text{there is linear space }Y\subseteq\mathcal F\cup\{0\}\text{ with }\dim Y<\lambda
$$
$$
\text{ which cannot be extended to a linear space }X\subseteq\mathcal F\cup\{0\}\text{ with }\dim X=\lambda\}\cup\{\vert V\vert^+\}).
$$
Now the assertion of Theorem \ref{generalThm} can be stated briefly.

\begin{corollary}\label{corollary1}
Let $V$ be a vector space over a field $\mathbb{K}$ with $\omega\leq\vert\mathbb{K}\vert=\mu<\dim V$. Assume that $\mathcal F$ is star-like and $\mu<\mathcal A(\mathcal F)\leq\dim V.$ Then $\mathcal{HL}(\mathcal F)>\mathcal A(\mathcal F)$. 
\end{corollary}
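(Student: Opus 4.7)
The plan is to observe that Corollary \ref{corollary1} is a direct linguistic repackaging of Theorem \ref{generalThm} in terms of the cardinal $\mathcal{HL}(\mathcal F)$ defined just before the corollary. So the whole matter reduces to unfolding the definition of $\mathcal{HL}$ and quoting the ``moreover'' clause of Theorem \ref{generalThm}; no fresh argument is required.

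First I would unfold the inequality $\mathcal{HL}(\mathcal F) > \mathcal A(\mathcal F)$. By the definition of $\mathcal{HL}$ this is equivalent to the assertion that $\lambda = \mathcal A(\mathcal F)$ does not belong to the set over which the minimum is taken. In concrete terms: for every linear subspace $Y \subseteq \mathcal F \cup \{0\}$ with $\dim Y < \mathcal A(\mathcal F)$, there exists a linear subspace $X$ with $Y \subseteq X \subseteq \mathcal F \cup \{0\}$ and $\dim X = \mathcal A(\mathcal F)$. Then I would note that the hypotheses of the corollary ($\mathcal F$ star-like, $\omega \leq |\mathbb K| = \mu < \dim V$, and $\mu < \mathcal A(\mathcal F) \leq \dim V$) are exactly the hypotheses of Theorem \ref{generalThm}, whose ``moreover'' clause asserts precisely this extension property. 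Matching the two statements finishes the argument.

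I do not anticipate any real obstacle, since the definition of $\mathcal{HL}$ was tailored so that this translation is immediate. The only small point to watch is the convention on whether the subspaces are required to lie in $\mathcal F$ or in $\mathcal F \cup \{0\}$, but because every linear subspace contains $0$, the two formulations agree once $\mathcal F$ is understood up to the zero vector, as is standard in the lineability literature. The main value of the corollary is therefore to motivate the new cardinal $\mathcal{HL}(\mathcal F)$ and to record the fact that this is the cleanest invariant captured by the proof of Theorem \ref{generalThm}.
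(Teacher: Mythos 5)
Your proposal is correct and matches the paper exactly: the paper offers no separate argument, introducing $\mathcal{HL}$ immediately after Theorem \ref{generalThm} and presenting the corollary as a literal restatement of its ``moreover'' clause. The only point you gloss over is that $\mathcal{HL}(\mathcal F)>\mathcal A(\mathcal F)$ formally requires excluding every $\lambda\leq\mathcal A(\mathcal F)$, not just $\lambda=\mathcal A(\mathcal F)$, but this is immediate since a subspace $Y$ that extends to an $\mathcal A(\mathcal F)$-dimensional $X\subseteq\mathcal F\cup\{0\}$ also extends to an intermediate subspace of $X$ of any dimension $\lambda$ with $\dim Y<\lambda\leq\mathcal A(\mathcal F)$.
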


Now we will show basic connections between $\mathcal{HL}(\mathcal F)$ and $\mathcal{L}(\mathcal F)$.

\begin{proposition}\label{PropertiesOfHomLin}
Let $V$ be a vector space. Then
\begin{itemize}
\item[(i)] $\mathcal{HL}(\mathcal F)\leq\mathcal{L}(\mathcal F)$ for every $\mathcal F\subseteq V$;
\item[(ii)] For every successor cardinal $\kappa\leq(\dim V)^+$ there is $\mathcal F\subseteq V$ with $\mathcal{HL}(\mathcal F)=\mathcal{L}(\mathcal F)=\kappa$;
\item[(iii)] For every $\lambda,\kappa\leq(\dim V)^+$ such that $\lambda^+<\kappa$ there is $\mathcal F\subseteq V$ with $\mathcal{HL}(\mathcal F)=\lambda^+$ and $\mathcal{L}(\mathcal F)=\kappa$;
\item[(iv)] $\mathcal{HL}(\mathcal F)$ is a successor cardinal.
\end{itemize} 
\end{proposition}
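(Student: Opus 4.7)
The plan is to address (i) and (iv) by short general arguments and (ii), (iii) by explicit constructions inside $V$.

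For (i), I observe that if $\lambda=\mathcal L(\mathcal F)$ then no subspace of $V$ of dimension $\lambda$ lies inside $\mathcal F\cup\{0\}$, so the zero subspace $Y=\{0\}$ satisfies $\dim Y=0<\lambda$ and cannot be extended to a subspace of dimension $\lambda$; this puts $\lambda$ in the set defining $\mathcal{HL}(\mathcal F)$, giving $\mathcal{HL}(\mathcal F)\leq\lambda$. For (iv), I argue by contradiction: suppose $\mathcal{HL}(\mathcal F)=\lambda$ is a limit cardinal, and pick a subspace $Y_0\subseteq\mathcal F\cup\{0\}$ with $\dim Y_0<\lambda$ that cannot be extended to a subspace of $\mathcal F\cup\{0\}$ of dimension $\lambda$. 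Since $\lambda$ is the minimum of the defining set, for every $\lambda'<\lambda$ each subspace of $\mathcal F\cup\{0\}$ of dimension less than $\lambda'$ extends to one of dimension $\lambda'$. I then fix a strictly increasing cofinal sequence $(\lambda_\xi)_{\xi<\on{cf}(\lambda)}$ in $\lambda$ with $\lambda_0>\dim Y_0$ and build by transfinite recursion a chain $Y_0\subseteq X_0\subseteq X_1\subseteq\ldots$ of subspaces of $\mathcal F\cup\{0\}$ with $\dim X_\xi=\lambda_\xi$; at a limit stage $\eta$ the union of the earlier $X_\xi$ has dimension strictly less than $\lambda$, and the extension property produces $X_\eta$. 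The overall union $\bigcup_\xi X_\xi$ is a subspace of $\mathcal F\cup\{0\}$ of dimension $\lambda$ containing $Y_0$, contradicting the choice of $Y_0$.

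For (ii), given a successor cardinal $\kappa=\mu^+\leq(\dim V)^+$, I take any subspace $W$ of $V$ of dimension $\mu$ and set $\mathcal F=W\setminus\{0\}$; then $\mathcal F\cup\{0\}=W$ and the subspaces of $V$ that sit inside $W$ are exactly the subspaces of $W$. A direct dimension count yields $\mathcal L(\mathcal F)=\mu^+=\kappa$, while the facts that every subspace of $W$ of dimension $<\lambda'\leq\mu$ extends inside $W$ to dimension $\lambda'$ and that $\{0\}$ cannot be extended to dimension $\mu^+$ together give $\mathcal{HL}(\mathcal F)=\mu^+=\kappa$.

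For (iii), given $\lambda^+<\kappa\leq(\dim V)^+$, I fix inside $V$ a subspace $V_1$ of dimension $\lambda$ and, for each cardinal $\alpha$ with $\lambda<\alpha<\kappa$, a subspace $W_\alpha$ of dimension $\alpha$, all chosen so that $V_1$ together with all the $W_\alpha$'s are in direct sum (a cardinal count shows $\lambda+\sum_{\lambda<\alpha<\kappa}\alpha\leq\dim V$ under the hypothesis $\kappa\leq(\dim V)^+$, so the construction fits into $V$). I set $\mathcal F\cup\{0\}=V_1\cup\bigcup_{\lambda<\alpha<\kappa}W_\alpha$. The key technical step, and the main obstacle, is the \emph{dichotomy}: any subspace $X$ of $V$ contained in this set-theoretic union is contained in $V_1$ or in a single $W_\alpha$. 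This I prove from the direct-sum decomposition by noting that if $X$ contained nonzero vectors from two distinct summands, their sum would have two nontrivial direct-sum components and hence could not lie in any single summand. Granted this dichotomy, a direct case check yields $\mathcal L(\mathcal F)=\kappa$ (every $\mu<\kappa$ is achieved by $V_1$ or some $W_\alpha$, but no subspace in the union has dimension $\kappa$); the subspace $V_1$ itself witnesses $\mathcal{HL}(\mathcal F)\leq\lambda^+$, because any proper superspace of $V_1$ lying in the union would have to contain elements from two summands, which is impossible; and the crucial restriction $\alpha>\lambda$ on the $W_\alpha$'s ensures that every subspace of $\mathcal F\cup\{0\}$ of dimension less than some $\lambda'\leq\lambda$, being contained in $V_1$ or in some $W_\alpha$ with $\dim W_\alpha>\lambda$, has ample room to extend to dimension $\lambda'$, yielding $\mathcal{HL}(\mathcal F)\geq\lambda^+$.
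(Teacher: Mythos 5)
Your proof is correct and takes essentially the same route as the paper's: (i) via the trivial subspace $\{0\}$, (ii) via a single $\mu$-dimensional subspace, (iii) via a union of summands in direct sum with increasing dimensions together with the dichotomy that any subspace of the union lies in a single summand, and (iv) via a transfinite chain of extensions along a cofinal sequence. No substantive differences or gaps to report.
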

\begin{proof}
Note that the cardinal number $\mathcal{L}(\mathcal F)$ can be defined in the similar terms as it was done for $\mathcal{HL}(\mathcal F)$, namely $\mathcal{L}(\mathcal F)$ is the smallest cardinal $\lambda$ such that the trivial linear space $Y=\{0\}$ cannot be extended to a linear space $X\subseteq\mathcal F\cup\{0\}$ with $\dim V=\lambda$. Therefore $\mathcal{HL}(\mathcal F)\leq\mathcal{L}(\mathcal F)$.  

To see (ii) take any successor cardinal $\kappa\leq\vert V\vert^+$. There is $\lambda\leq\vert V\vert$ with $\lambda^+=\kappa$. Let $\mathcal F$ be a linear subspace of $V$ of dimension $\lambda$. Then $\mathcal F$ is $\lambda$-lineable but not $\lambda^+$-lineable. Thus $\mathcal{L}(\mathcal F)=\kappa$. Note that any linear subspace of $\mathcal F$ can be extended to a $\lambda$-dimensional space $\mathcal F$, but cannot be extended to a $\kappa$-dimensional space. Thus $\mathcal{HL}(\mathcal F)=\kappa$. 

Let $\on{Card}[\lambda,\kappa)=\{\nu:\lambda\leq\nu<\kappa$ and $\nu$ is a cardinal number$\}$. For every $\nu\in\on{Card}[\lambda,\kappa)$ we find  $B_\nu$ of cardinality $\nu$ such that $B=\bigcup\{B_\nu:\nu\in\on{Card}[\lambda,\kappa)\}$ is a linearly independent subset of $V$. Let $W_\nu=[B_\nu]$. Let $\mathcal F=\bigcup_{\nu\in\on{Card}[\lambda,\kappa)} W_\nu$. Clearly $\mathcal F$ is $\nu$-lineable for any $\nu<\kappa$. Take any linear space $W\subseteq\mathcal F$. Since $[\mathcal F]=[B]$, then any element $x$ of $W$ is of the form $\sum_{i=1}^k\sum_{j=1}^{n_i}\alpha_{ij}w_{ij}$ where $\alpha_{ij}\in\mathbb K$ and $w_{i1},\dots,w_{in_i}$ are distinct elements of $B_{\nu_i}$, $\lambda\leq\nu_1<\dots<\nu_{k}<\kappa$. If $x\in W_{\nu}$ for some $\nu$, then $x=\sum_{p=1}^m\alpha_pw_p$, $\alpha_p\in\mathbb K$, $w_1,\dots,w_m\in B_\nu$ are distinct. Then 
$$
\sum_{i=1}^k\sum_{j=1}^{n_i}\alpha_{ij}w_{ij}=\sum_{p=1}^m\alpha_pw_p.
$$
Now, if $\nu\notin\{n_0,\dots,\nu_k\}$, then $\alpha_{ij}=0=\alpha_p$ for every $i,j,p$. Thus $x=0$. If $\nu=\nu_l$, then $\alpha_{ij}=0$ for every $i\neq l$. Thus $x\in W_\nu$. This shows that $W$ does not contain any nontrivial linear combination of $\alpha_1x_1+\dots+\alpha_nx_n$ with $x_i\in W_{\nu_i}$ for distinct $\nu_1,\dots,\nu_n$. Therefore $W\subseteq W_\nu$ for some $\nu$. Consequently $\dim W<\kappa$, which means that $\mathcal F$ is not $\kappa$-lineable. Hence $\mathcal{L}(\mathcal F)=\kappa$. 

Take any linear space $Y\subseteq\mathcal F$ of dimension less than $\lambda$. As before we obtain that $Y$ is a subset of some $W_\nu$. Therefore $Y$ can be extended to a linear subspace of $\mathcal F$ of dimension $\lambda$. On the other hand $Y=W_\lambda$ cannot be extended to a linear space contained in $\mathcal F$ of dimension $\lambda^+$. Hence $\mathcal{HL}(\mathcal F)=\lambda^+$. 

To prove (iv) assume that $\mathcal{HL}(\mathcal F)=\kappa$. Then for any $\lambda<\kappa$ and any linear space $Y\subseteq\mathcal F\cup\{0\}$ of dimension less than $\lambda$ there is a linear space $X\supset Y$ contained in $\mathcal F\cup\{0\}$ of dimension $\lambda$. Suppose to the contrary that $\kappa$ is a limit cardinal. There are cardinals $\tau_\nu<\kappa$, $\nu<\on{cf}(\kappa)\leq\kappa$ with $\bigcup_{\nu<\on{cf}(\kappa)}\tau_\nu=\kappa$. Since $\mathcal{HL}(\mathcal F)=\kappa$, then for any linear space $Y\subseteq\mathcal F$ of dimension less than $\kappa$ we can inductively define an increasing chain $\{Y_\nu:\dim Y<\nu<\on{cf}(\kappa)\}$ of linear spaces with $\dim Y_\nu=\tau_\nu$ and $Y_\nu\subseteq\mathcal F\cup\{0\}$. Then $Y'=\bigcup Y_\nu$ is a linear space of dimension $\kappa$ such that $Y\subseteq Y'\subseteq\mathcal F$. Hence $\mathcal{HL}(\mathcal F)\geq\kappa^+$ which is a contradiction. 
\end{proof}

\begin{theorem}\label{2-lin}
Assume that $3\leq\kappa\leq\mu$, $\mathbb{K}$ is a field of cardinality $\mu$ and $V$ is a linear space over $\mathbb{K}$ with $\dim V=2^\mu$. Then there is a star-like family $\mathcal F\subseteq V$ with $\mathcal A(\mathcal F)=\kappa$ which is not 2-lineable. 
\end{theorem}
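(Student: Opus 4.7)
The plan is to construct $\mathcal{F}$ by transfinite induction of length $\lambda:=\dim V=2^\mu$. I enumerate the two-dimensional subspaces of $V$ as $\{W_\alpha:\alpha<\lambda\}$, the subsets of $V$ of cardinality less than $\kappa$ as $\{F_\alpha:\alpha<\lambda\}$ (which is possible because $|V|^{<\kappa}\le\lambda^\mu=\lambda$), and the elements of $V$ as $\{\varphi_\alpha:\alpha<\lambda\}$. I also fix in advance a set $F^*\subseteq V$ of cardinality $\kappa$ taken to be linearly independent and ``generic'' in the sense that for no $\varphi\in V$ is the translate $\varphi+F^*$ contained in a union of fewer than $\lambda$ one-dimensional subspaces of $V$. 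Such an $F^*$ is built by a short preliminary recursion inside $V$, using $|V|=2^\mu>\mu\ge\kappa$.

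During the main recursion I maintain two disjoint star-like sets $P_\alpha,N_\alpha\subseteq V\setminus\{0\}$ of cardinality $<\lambda$, each a union of punctured one-dimensional subspaces. $P_\alpha$ records the lines committed to $\mathcal F$, and $N_\alpha$ records those committed to $V\setminus\mathcal F$. At stage $\alpha$ I do three things: (a) if no line of $W_\alpha$ currently lies in $N_\alpha$, I select a line of $W_\alpha$ disjoint from $P_\alpha$ and place it in $N$, thereby preventing $W_\alpha\setminus\{0\}\subseteq\mathcal F$; (b) I find a vector $\varphi\in V$ with $\varphi+f\ne 0$ and $\varphi+f\notin N_\alpha$ for every $f\in F_\alpha$, and such that adding the new lines $\mathbb K(\varphi+f)$ to $P$ does not complete any two-dimensional subspace inside $P$, and then add those lines to $P$; (c) I pick $f^*\in F^*$ with $\mathbb K(\varphi_\alpha+f^*)$ disjoint from $P_\alpha$ and attach this line to $N$, which guarantees $\varphi_\alpha+F^*\not\subseteq\mathcal F$ in the final family.

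Setting $\mathcal F:=\bigcup_{\alpha<\lambda}P_\alpha$, the verifications are straightforward: $\mathcal F$ is star-like as a union of punctured one-dimensional subspaces; it is not 2-lineable by (a); $\mathcal A(\mathcal F)\ge\kappa$ holds because every $F_\alpha$ admits a translate in $\mathcal F$ by (b); and $\mathcal A(\mathcal F)\le\kappa$ holds because no translate of $F^*$ lies in $\mathcal F$ by (c), since each $\varphi\in V$ is some $\varphi_\alpha$. The cardinality bookkeeping needed in (b) is routine: since $\on{cf}(2^\mu)>\mu\ge\kappa$ by K\"onig's theorem, the ``forbidden'' sets of $\varphi$---translates of $N_\alpha$, the set $\{-f:f\in F_\alpha\}$, and the vectors which would complete some two-dimensional subspace already ``almost'' in $P_\alpha$---each have cardinality $<\lambda=|V|$, so a valid $\varphi$ exists.

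The main obstacle I anticipate is the interaction between tasks (b) and (c). When new lines are placed in $P$ during (b), we must ensure that for no $\varphi$ still to be handled in (c) does the translate $\varphi+F^*$ end up entirely in $P$; otherwise step (c) would fail for that $\varphi$. This is precisely where the generic choice of $F^*$ is used: since $\varphi+F^*$ consists of $\kappa$ elements lying on $\kappa$ distinct lines and cannot be swallowed by a union of $<\lambda$ one-dimensional subspaces, at least one of its lines remains outside $P_\alpha$, providing a valid $f^*$ in (c). A secondary obstacle is the consistency of (a) with (b)---but since (b) explicitly refuses $\varphi$'s that would complete a two-dimensional subspace in $P$, each $W_\alpha$ retains at least one ``free'' line when we arrive at step (a).
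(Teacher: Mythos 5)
Your two-sided bookkeeping (a growing set $P$ of lines committed to $\mathcal F$ and a growing set $N$ of lines committed to the complement) is a legitimate alternative framework, and the cardinality estimates for steps (a) and (b) are fine since $\on{cf}(2^\mu)>\mu$. The fatal problem is the object $F^*$ on which the entire upper bound $\mathcal A(\mathcal F)\leq\kappa$ rests. You require a set of cardinality $\kappa$ such that no translate $\varphi+F^*$ is contained in a union of fewer than $\lambda=2^\mu$ one-dimensional subspaces. No such set exists: any set of $\kappa$ vectors is contained in the union of the at most $\kappa$ lines through its elements, and $\kappa\leq\mu<2^\mu$. So the ``genericity'' you invoke is vacuously unsatisfiable, and the obstacle you yourself single out --- that the lines added in step (b) over $2^\mu$ stages might swallow an entire translate $\varphi+F^*$ before $\varphi$ is reached in step (c) --- remains open. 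Cardinality alone cannot rescue step (c): at stage $\alpha$ the set $P_\alpha$ may already consist of close to $2^\mu$ lines, while $\varphi_\alpha+F^*$ has only $\kappa\leq\mu$ elements, so nothing prevents all of its lines from already lying in $P_\alpha$.

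Any repair must constrain step (b) relative to $F^*$, not choose $F^*$ cleverly in advance. This is exactly what the paper does: the reference set $\mathcal I$ (its analogue of your $F^*$) is linearly independent, and each new element $\varphi_\xi+f$ is chosen outside the linear span $Y_\xi=[\mathcal I\cup\bigcup_{\beta<\xi}X_\beta]$ of $\mathcal I$ together with everything built so far. Then if $\varphi+i,\varphi+i'\in\mathcal F$ for distinct $i,i'\in\mathcal I$, the difference $i-i'\in[\mathcal I]$ forces both elements into the same stage $X_\xi$ (an element of a later stage can never equal an element of an earlier stage plus a vector of $[\mathcal I]$), and since $X_\xi$ is a union of only $\vert G_\xi\vert<\kappa$ lines while $\vert\mathcal I\vert=\kappa$, a pigeonhole puts two of the elements $\varphi+i$ on one line $[\varphi_\xi+f]$, whence $\varphi_\xi+f\in[\mathcal I]\subseteq Y_\xi$, a contradiction. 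You could instead add to your step (b) the explicit requirement that the new lines never complete a translate of $F^*$ inside $P\cup\{0\}$ (using that an affine line meets a linearly independent set in at most two points), but carrying this out --- especially for finite $\kappa$, where a single stage adds enough lines to cover a whole translate from scratch --- essentially reproduces the span argument you omitted.
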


\begin{proof}
Let $\{G_\xi:\xi<2^\mu\}$ be an enumeration of all subsets of $V$ of cardinality less than $\kappa$. Let $\mathcal I\subseteq V$ be a linearly independent set of cardinality $\kappa$. Inductively for any $\xi<2^\mu$ we construct $\varphi_\xi\in V$ and $X_\xi\subseteq V$ such that 
\begin{itemize}
\item[(a)] $\varphi_\xi+f\notin Y_\xi:=[\mathcal I\cup\bigcup_{\beta<\xi}X_\beta]$ for any $f\in G_\xi$;
\item[(b)] $X_\xi=\bigcup_{f\in G_\xi}[\varphi_\xi+f]$. 
\end{itemize}  
Suppose that we have already constructed $\varphi_\xi$ and $X_\xi$ for every $\xi<\alpha$. Let $Y_\alpha=[\mathcal I\cup\bigcup_{\xi<\alpha}X_\xi]$. Since $\dim[X_\xi]<\kappa$, then $\dim Y_\alpha\leq\vert\alpha\vert\kappa+\kappa$. Thus $\vert Y_\alpha\vert<2^\mu$ and we can choose $\varphi_\alpha\notin Y_\alpha-G_\alpha$ (equivalently $\varphi_\alpha+f\notin Y_\alpha$ for every $f\in G_\alpha$). Define $X_\alpha=\bigcup_{f\in G_\alpha}[\varphi_\alpha+f]$.   

Observe that $X_\xi\cap X_{\xi'}=\{0\}$ and $Y_\xi\subseteq Y_{\xi'}$ for $\xi<\xi'$. Moreover $Y_\xi\cap X_{\xi'}=\{0\}$ and $X_\xi\cap[\mathcal I]=\{0\}$ for $\xi\leq\xi'$. Define $\mathcal F=\bigcup_{\xi<2^\mu}X_\xi$.  Take any $G\subseteq V$ with $\vert G\vert<\kappa$. There is $\xi$ with $G=G_\xi$. Then $\varphi_\xi+G_\xi\subseteq X_\xi\subseteq\mathcal F$. Therefore $\mathcal A(\mathcal F)\geq\kappa$. 

Now, we will show that for any $\varphi\in V$ there is $i\in\mathcal I$ with $\varphi+i\notin\mathcal F$. Suppose to the contrary that it is not the case, that is there is $\varphi\in V$ such that for any $i\in\mathcal I$ we have $\varphi+i\in\mathcal F$. Then there are distinct $i,i'\in\mathcal I$ with $\varphi+i,\varphi+i'\in\mathcal F$. Suppose first that $\varphi+i\in X_\xi$ and $\varphi+i'\in X_{\xi'}$ with $\xi<\xi'$. Then
$$
X_{\xi'}\ni\varphi+i'=\varphi+i+(i'-i)\in[X_\xi\cup\mathcal I]\subseteq Y_\xi. 
$$
Thus $\varphi+i'=0$. Therefore $\varphi\in[\mathcal I]$ and consequently $\varphi+\mathcal I\subseteq[\mathcal I]$. Since $i\neq i'$, then $X_\xi\ni\varphi+i=\varphi+i'+(i-i')=i-i'\in[\mathcal I]$ which contradicts the fact that $X_{\xi}\cap[\mathcal I]=\{0\}$. Hence there is $\xi$ such that $\varphi+i\in X_\xi$ for every $i\in\mathcal I$. Since $\vert G_\xi\vert<\kappa$ and $\vert\mathcal I\vert=\kappa$, there are two distinct $i,i'\in\mathcal I$ such that $\varphi+i=a(\varphi_\xi+f)$ and $\varphi+i'=a'(\varphi_\xi+f)$ for some $a,a'\in\mathbb{K}$ and $f\in G_\xi$. Thus $i-i'=(a-a')(\varphi_\xi+f)$ and therefore $\varphi_\xi+f\in[\mathcal I]$ which is a contradiction. 

Finally we obtain that $\varphi+\mathcal I\nsubseteq\mathcal F$ for every $\varphi\in V$, which means that $\mathcal A(\mathcal F)\leq\kappa$. Hence $\mathcal A(\mathcal F)=\kappa$. 

Let $U=[h,h']$ for two linearly independent elements $h,h'\in\mathcal F$. Then $h\in X_\xi$ and $h'\in X_{\xi'}$ for some $\xi$ and $\xi'$. If $\xi<\xi'$, then $h\in Y_{\xi'}$ and $h'\notin Y_{\xi'}$. Let $f\in U\setminus([h]\cup[h'])$. Then $f=ah+a'h'$ for some $a,a'\in\mathbb{K}\setminus\{0\}$. If $f\in Y_{\xi'}$, then $h'=(f-ah)/a'\in Y_{\xi'}$ which leads to contradiction. Thus $f\notin Y_{\xi'}$ which means that $U\cap Y_{\xi'}=[h]$. Since two-dimensional space $U$ cannot be covered by less than $\mu$ many sets of the form $[g]$, then $U\nsubseteq Y_{\xi'}\cup X_{\xi'}$. However $U\subseteq[Y_{\xi'}\cup X_{\xi'}]$. Therefore $U\cap X_\alpha=\{0\}$ for every $\alpha>\xi'$. Hence $U\nsubseteq\mathcal F$. 

If $h,h'\in X_\xi$, then $U\cap\bigcup_{\beta\neq\xi}X_\beta=\{0\}$ and $U\nsubseteq X_\xi$. That implies that $\mathcal F$ does not contains two-dimensional vector space. 

Finally, note that $\mathcal F$ is star-like. 
\end{proof}

The next result, which is a modification of Theorem \ref{2-lin}, shows that if $\mathcal A(\mathcal F)\leq\vert\mathbb{K}\vert$, then $\mathcal L(\mathcal F)$ can be any cardinal not greater than $(2^\mu)^+$.  

\begin{theorem}\label{mainThm}
Assume that $3\leq\kappa\leq\mu$, $\mathbb{K}$ is a field of cardinality $\mu$ and $V$ is a linear space over $\mathbb{K}$ with $\dim V=2^\mu$. Let $1<\lambda\leq(2^\mu)^+$. There is star-like family $\mathcal F\subseteq V$ such that $\kappa\leq\mathcal A(\mathcal F)\leq\kappa+1$ and $\mathcal L(\mathcal F)=\lambda$. 
\end{theorem}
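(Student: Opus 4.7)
The plan is to modify the construction of Theorem~\ref{2-lin} by grafting in a pre-chosen subspace structure realizing $\mathcal L(\mathcal F)=\lambda$, while preserving the additive layer $\bigcup_\xi X_\xi$ responsible for $\mathcal A(\mathcal F)\in[\kappa,\kappa+1]$. For successor $\lambda=\sigma^+$ fix a subspace $W\subseteq V$ with $\dim W=\sigma$; for limit $\lambda$ fix a family $\{W_\nu:\nu\in\on{Card}[1,\lambda)\}$ of pairwise independent subspaces with $\dim W_\nu=\nu$, as in Proposition~\ref{PropertiesOfHomLin}(iii). Denote the target part by $W^*$ ($=W$ or $=\bigcup_\nu W_\nu$). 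Pick a linearly independent $\mathcal I\subseteq V$ of cardinality $\kappa$ (or $\kappa+1$ when $\kappa$ is finite) with $[\mathcal I]\cap[W^*]=\{0\}$. Then run the transfinite induction of Theorem~\ref{2-lin}: enumerate the subsets of $V$ of size less than $\kappa$ as $\{G_\xi:\xi<2^\mu\}$ and choose $\varphi_\xi\in V$ and $X_\xi=\bigcup_{f\in G_\xi}[\varphi_\xi+f]$ with $\varphi_\xi+f\notin Y_\xi$ for $f\in G_\xi$, where $Y_\xi$ now also contains a Hamel basis of $[W^*]$. Set $\mathcal F=W^*\cup\bigcup_\xi X_\xi$.

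In the edge cases $\sigma=2^\mu$ or $\lambda=2^\mu$, $\dim Y_\xi$ reaches $2^\mu=\dim V$ so the size bound of Theorem~\ref{2-lin} fails. I would additionally arrange $\dim V/[W^*]=2^\mu$ (possible because $2^\mu+2^\mu=2^\mu$) and pick $\varphi_\xi$ in a coset of $[W^*]$ whose image in $V/[W^*]$ avoids the image of $Y_\xi-G_\xi$; since $Y_\xi=[W^*]\oplus Y'_\xi$ with $\dim Y'_\xi<2^\mu$, this image lies inside a proper subspace of $V/[W^*]$ translated by fewer than $\kappa$ vectors, so a fresh coset exists. Star-likeness is immediate, and $\mathcal A(\mathcal F)\ge\kappa$ follows because each $G_\xi$ has the translate $\varphi_\xi+G_\xi\subseteq X_\xi\subseteq\mathcal F$. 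For $\mathcal A(\mathcal F)\le\kappa+1$ I apply Theorem~\ref{2-lin}'s argument to $\mathcal I$: if $\varphi+\mathcal I\subseteq\mathcal F$, two distinct $i,i'\in\mathcal I$ cannot both have $\varphi+i,\varphi+i'\in W^*$, else $i-i'\in[W^*]\cap[\mathcal I]=\{0\}$; so at most one element of $\mathcal I$ is absorbed by $W^*$, and on the remaining $\ge\kappa$ elements the pigeonhole of Theorem~\ref{2-lin} inside $\bigcup_\xi X_\xi$ yields the contradiction.

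Finally, for $\mathcal L(\mathcal F)=\lambda$, each $W_\nu$ (or $W$) witnesses $\nu$-lineability for every $\nu<\lambda$. Conversely, consider a subspace $U\subseteq\mathcal F\cup\{0\}$ containing vectors from two different ``blocks''---either $u\in\bigcup_\xi X_\xi$ and $w\in W^*$, or $u,w$ from distinct $W_\nu$'s. Let $\xi^*$ be maximal with $X_{\xi^*}\cap[u,w]\neq\{0\}$; it exists because $[u,w]\subseteq Y_\alpha$ for all sufficiently large $\alpha$, which forces $X_\alpha\cap[u,w]=\{0\}$. Then $Y_{\xi^*}\cap[u,w]$ is at most one-dimensional, $X_{\xi^*}\cap[u,w]$ is a union of fewer than $\kappa\le\mu$ lines, and $W^*$ contributes at most two lines to $[u,w]$; the plane $[u,w]$ would thus be covered by strictly fewer lines than it has, contradicting $[u,w]\subseteq\mathcal F$. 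Hence $U$ is contained in a single $W_\nu$ (or $W$), giving $\dim U<\lambda$, or in $\bigcup_\xi X_\xi$, giving $\dim U\le 1$ by Theorem~\ref{2-lin}. The main obstacle is precisely the edge case $\sigma=2^\mu$ or $\lambda=2^\mu$: the target subspace itself has dimension $|V|$, and the direct counting bound of Theorem~\ref{2-lin} no longer produces $\varphi_\xi$ by size; the coset decomposition in $V/[W^*]$ above is the needed workaround.
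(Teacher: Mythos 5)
Your proposal is correct and follows essentially the same route as the paper: graft the set $Z=\bigcup_{\nu}W_\nu$ from Proposition \ref{PropertiesOfHomLin}(iii) onto the construction of Theorem \ref{2-lin}, take $\mathcal I$ of size $\kappa+1$ so that at most one of its elements can be absorbed by the $Z$-part, and rule out two-dimensional subspaces meeting two blocks by counting lines over $\mathbb K$. The only real divergence is technical: the paper places $Z$ and $\mathcal I$ in complementary direct summands $V=V_1\oplus V_2$ and does \emph{not} require the sets $X_\xi$ to avoid $[Z]$ (the line-counting argument tolerates any overlap), so the edge case $\dim[W^*]=2^\mu$ that forces your quotient workaround never arises.
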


\begin{proof}
Let $\{G_\xi:\xi<2^\mu\}$ be an enumeration of all subsets of $V$ of cardinality less than $\kappa$. Write $V$ as a direct sum $V_1\oplus V_2$ of two vector spaces $V_1$ and $V_2$ with $\dim V_1=\dim V_2=2^\mu$. Let $\on{Card}(\lambda)=\{\nu<\lambda:\nu\text{ is a cardinal number}\}$. As in the proof of Proposition \ref{PropertiesOfHomLin}(iii) we can find vector spaces $W_\nu\subseteq V_1$, $\nu\in\on{Card}(\lambda)$ such that $\dim W_\nu=\nu$ and the union of bases of all $W_\nu$'s forms a linearly independent set. Put $Z=\bigcup_{\nu\in\on{Card}(\lambda)}W_\nu\subseteq V_1$ and note that $\mathcal L(Z)=\lambda$. Let $\mathcal I\subseteq V_2$ be a linearly independent set of cardinality $\kappa+1$. Inductively for any $\xi<2^\mu$ we construct $\varphi_\xi\in V$ and $X_\xi\subseteq V$ such that $\varphi_\xi$ and $X_\xi$ satisfy the formulas (a) and (b) from the proof of Theorem \ref{2-lin}. Define $\mathcal F=Z\cup\bigcup_{\xi<2^\mu}X_\xi$. Then $\mathcal A(\mathcal F)\geq\kappa$. 

Now, we will show that for any $\varphi\in V$ there is $i\in\mathcal I$ with $\varphi+i\notin\mathcal F$. Suppose to the contrary that it is not the case, that is there is $\varphi\in V$ such that for any $i\in\mathcal I$ we have $\varphi+i\in\mathcal F$. Then there are $i,i'\in\mathcal I$ with $\varphi+i,\varphi+i'\in\mathcal F$. Suppose first that $\varphi+i\in Z$ and $\varphi+i'\in Z$. Then $i-i'\in V_1$. Since $[\mathcal I]\cap V_1=\{0\}$, then $i=i'$. Thus there is at most one element $i\in\mathcal I$ with $\varphi+i\in Z$. Now there are at least $\kappa\geq 3$ elements $i\in\mathcal I$ such that $\varphi+i\in\mathcal\bigcup_{\xi<2^\mu}X_\xi$. Using the argument as in the proof of Theorem \ref{2-lin} we reach a contradiction and we obtain that $\varphi+\mathcal I\nsubseteq\mathcal F$ for every $\varphi\in V$, which means that $\mathcal A(\mathcal F)\leq\kappa+1$. Hence $\kappa\leq\mathcal A(\mathcal F)\leq\kappa+1$. 

Since $Z\subseteq\mathcal F$ and $\mathcal L(Z)=\lambda$, then $\mathcal L(\mathcal F)\geq\lambda$. From the proof of Theorem \ref{2-lin} we obtain that $\bigcup_{\xi<2^\mu}X_\xi$ does not contain 2-dimensional vector space. To show that $\mathcal L(\mathcal F)=\lambda$ it suffices to show that each 2-dimensional space $W$ contained in $\mathcal F$ must be a subset of $Z$. 

Let $W$ be a 2-dimensional space which is not contained in $Z$. In the proof of Theorem \ref{2-lin} we have shown that the cardinality of the family of one-dimensional subspaces of $W$ is less than $\mu$. Moreover, by the construction of $Z$, $W$ has at most two one-dimensional subspaces contained in $Z$. Consequently $W$ is not a subset of $\mathcal F$.  
\end{proof}

Note that $\mathcal{HL}(\mathcal F)=2$ for $\mathcal F$ constructed in the proofs of Theorem \ref{2-lin} and Theorem \ref{mainThm}.


\section{lineability of residual star-like subsets of Banach spaces} 

Let us present the following example. Let $\mathcal J$ be an ideal of subsets of some set $X$ which does not contain $X$. By $\on{add}(\mathcal J)$ we denote the cardinal number defined as $\min\{\vert\mathcal G\vert:\mathcal G\subseteq\mathcal J$ and $\bigcup\mathcal G\notin\mathcal J\}$ where $\vert\mathcal G\vert$ stands for cardinality of $\mathcal G$. Let $\mathcal N$ and $\mathcal M$ stands for $\sigma$-ideal of null and meager subsets of the real line, respectively. Then $\omega_1\leq\on{add}(\mathcal N)\leq\on{add}(\mathcal M)\leq\mathfrak c$. Moreover, if $X$ is an uncountable complete separable metric space and $\mathcal M_X$ is an ideal of meager subsets of $X$, then $\on{add}(\mathcal M)=\on{add}(\mathcal M_X)$.  

Let $V=\R$ be a linear space over $\mathbb{K}=\Q$. Let $\mathcal J$ be a translation invariant proper $\sigma$-ideal of subsets of $\R$ and let $\mathcal F$ be a $\mathcal J$-residual subset of $\R$, i.e. $\mathcal F^c\in\mathcal J$. It turns out that $\mathcal A(\mathcal F)\geq\on{add}\mathcal J$. To see it fix $F\subseteq\R$ with $\vert F\vert<\on{add}\mathcal J$. For any $f\in F$ consider a set
$$
T_f=\{t\in\R:t+f\notin\mathcal F\}=\{t\in\R:\exists g\in\mathcal F^c(t=g-f)\}\subseteq\mathcal F^c-f. 
$$
Thus $T_f\in\mathcal J$. Since $\vert F\vert<\on{add}\mathcal J$, then also $\bigcup_{f\in F}T_f\in\mathcal J$. Therefore there is $t\in\R$ such that $t+f\in\mathcal F$ for any $f\in F$. 

If we additionally assume that $\mathcal F$ is star-like, then by Theorem \ref{generalThm} we obtain that $\mathcal {HL}(\mathcal F)>\on{add}\mathcal J$. In particular if $A$ is positive Lebesgue measure (non-meager with Baire property), then $\Q A=\{qa:q\in\Q,a\in A\}$ is ($\Q$-)star-like of full measure (residual) and therefore $\mathcal {HL}(\mathcal F)>\on{add}(\mathcal N)$ ($>\on{add}(\mathcal M)$).   

Using a similar reasoning one can prove the following.
\begin{theorem}\label{residual}
Assume that $X$ is a separable Banach space. Let $\mathcal F\subseteq X$ be residual and star-like, and let $\mathbb{K}\subseteq\R$ be a field of cardinality less than $\on{add}(\mathcal M)$. Consider $X$ as a linear space over $\mathbb K$.  Then $\mathcal {HL}_{\mathbb K}(\mathcal F)>\on{add}(\mathcal M)$. In particular $\mathcal F$ contains an uncountably dimensional vector space over $\mathbb K$.    
\end{theorem}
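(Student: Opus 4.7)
The plan is to carry out in the Banach setting the argument already sketched for $V=\R$ just before the theorem. Two preliminary facts make this possible: every translation $x\mapsto x+t$ on $X$ is a homeomorphism, so the ideal $\mathcal M_X$ of meager subsets of $X$ is translation invariant; and because $X$ is an uncountable complete separable metric space, $\on{add}(\mathcal M_X)=\on{add}(\mathcal M)$, as recalled in the introductory paragraph of the section.

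I would first establish $\mathcal A(\mathcal F)\geq\on{add}(\mathcal M)$. Fix $F\subseteq X$ with $\vert F\vert<\on{add}(\mathcal M)$ and, for each $f\in F$, set
$$
T_f=\{t\in X:t+f\notin\mathcal F\}=\mathcal F^c-f.
$$
Each $T_f$ is a translate of the meager set $\mathcal F^c$, hence meager; as $\vert F\vert<\on{add}(\mathcal M_X)$, the union $\bigcup_{f\in F}T_f$ is still meager. Choosing $\varphi$ in its (residual, hence nonempty) complement yields $\varphi+F\subseteq\mathcal F$.

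Next I would verify the hypotheses of Corollary \ref{corollary1}. Since $X$ is an infinite-dimensional separable Banach space, $\vert X\vert=\mathfrak c$, and since $\mu=\vert\mathbb K\vert<\on{add}(\mathcal M)\leq\mathfrak c$, a standard cardinal count gives $\dim_{\mathbb K}X=\mathfrak c$. Together with the previous step one obtains $\omega\leq\mu<\on{add}(\mathcal M)\leq\mathcal A(\mathcal F)$ and $\mu<\dim_{\mathbb K}X$, and $\mathcal F$ is $\mathbb K$-star-like by hypothesis. Invoking the ``moreover'' clause of Theorem \ref{generalThm} with target cardinal $\on{add}(\mathcal M)$ in place of $\mathcal A(\mathcal F)$, every $\mathbb K$-subspace $Y\subseteq\mathcal F\cup\{0\}$ of dimension less than $\on{add}(\mathcal M)$ extends to a $\mathbb K$-subspace of dimension $\on{add}(\mathcal M)$ contained in $\mathcal F\cup\{0\}$, which is precisely $\mathcal{HL}_{\mathbb K}(\mathcal F)>\on{add}(\mathcal M)$. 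Since $\on{add}(\mathcal M)\geq\omega_1$, applying this to $Y=\{0\}$ delivers the ``in particular'' uncountably $\mathbb K$-dimensional subspace inside $\mathcal F$.

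The only mildly delicate point is that Theorem \ref{generalThm} is invoked with the target cardinal set to $\on{add}(\mathcal M)$ rather than the possibly larger (and a priori harder to bound) $\mathcal A(\mathcal F)$; but a glance at that proof shows it is a Zorn-plus-Lemma \ref{lem1} argument that runs at any target $\lambda$ with $\mu<\lambda\leq\mathcal A(\mathcal F)$, so this causes no real difficulty.
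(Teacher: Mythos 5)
Your proposal is correct and follows exactly the route the paper intends: the paper's ``proof'' is just the remark that the $T_f=\mathcal F^c-f$ translation argument given for $V=\R$ over $\Q$ carries over verbatim to a separable Banach space (using translation invariance of $\mathcal M_X$ and $\on{add}(\mathcal M_X)=\on{add}(\mathcal M)$), followed by an appeal to Theorem \ref{generalThm}. Your extra care in running the Zorn argument of Theorem \ref{generalThm} at the target cardinal $\on{add}(\mathcal M)$ rather than at the possibly larger $\mathcal A(\mathcal F)$ is a legitimate refinement of a point the paper leaves implicit, not a departure from its method.
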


Let $\hat{C}[0,1]$ stand for the family of functions from $C[0,1]$ which attain the maximum only at one point. Then $\hat{C}[0,1]$ is star-like and residual but not 2-lineable, see \cite{BCFPS} and \cite{GQ} for details. This shows that Theorem \ref{residual} would be false for $\mathbb K=\R$. On the other hand, Theorem \ref{residual} shows that for any field $\mathbb K\subseteq\R$ of cardinality less than $\on{add}(\mathcal M)$ there is uncountable family $\mathcal F\subseteq\hat{C}[0,1]$ such that any nontrivial linear combination of elements from $\mathcal F$ with coefficients from $\mathbb K$ attains the maximum only at one point.


\section{Homogeneous lineability and lineability numbers of some subsets of $\R^\R$}

In this section we will apply Theorem \ref{generalThm} to obtain homogeneous lineability of families of functions from $\R^\R$. We will consider those families for which the additivity has been already computed. 

Let $f\in\R^\R$. We will say that 
\begin{enumerate}
\item $f\in\text{D}(\R)$ ($f$ is Darboux) if $f$ maps connected sets onto connected sets.
\item $f\in\text{ES}(\R)$ ($f$ is everywhere surjective) if $f(U)=\R$ for every nonempty open set $U$;
\item $f\in\text{SES}(\R)$ ($f$ is strongly everywhere surjective) if $f$ takes each real value $\mathfrak c$ many times in each interval;
\item $f\in\text{PES}(\R)$ ($f$ is perfectly everywhere surjective) if $f(P)=\R$ for every perfect set $P$;
\item $f\in\text{J}(\R)$ ($f$ is Jones function) if the graph of $f$ intersects every closed subset of $\R^2$ with uncountable projection on the $x$-axis. 
\item $f\in\text{AC}(\R)$ ($f$ is almost continuous, in the sense of J. Stallings) if every open set containing the graph of $f$
contains also the graph of some continuous function.
\item If $h:X\to\R$, where $X$ is a topological space, $h\in\text{Conn}(X)$ ($h$ is a connectivity function) if the graph of $h|_C$ is connected
for every connected set $C\subseteq X$.
\item $f\in\text{Ext}(\R)$ ($f$ is extendable) if there is a connectivity function $g :\R\times[0, 1]\to\R$ such that $f(x)=g(x, 0)$ for every $x\in\R$.
\item $f\in\text{PR}(\R)$ ($f$ is a perfect road function) if for every $x\in\R$ there is a perfect set $P\subseteq\R$ such that $x$ is a bilateral limit
point of $P$ and $f\vert_P$ is continuous at $x$.
\item $f\in\text{PC}(\R)$ ($f$ is peripherally continuous) if for every $x\in\R$ and pair of open sets $U,V\subseteq\R$ such that $x\in U$ and
$f (x)\in V$ there is an open neighborhood $W$ of $x$ with $W\subseteq U$ and $f (\text{bd}(W))\subseteq V$.
\item $f\in\text{SZ}(\R)$ ($f$ is a Sierpi\'nski--Zygmund function) if $f$ is not continuous on any subset of the real line of cardinality $\mathfrak c$.
\end{enumerate} 

We start from recalling two cardinal numbers:
$$
e_\mathfrak{c}=\min\{\vert F\vert:F\subseteq\R^\R, \forall\varphi\in\R^\R\exists f\in F(\on{card}(f\cap\varphi)<\mathfrak c)\},
$$
$$
d_\mathfrak{c}=\min\{\vert F\vert:F\subseteq\R^\R, \forall\varphi\in\R^\R\exists f\in F(\on{card}(f\cap\varphi)=\mathfrak c)\}.
$$

It was proved in \cite{CM} that $\mathcal A(\text{D}(\R))=\mathcal A(\text{AC}(\R))=e_\mathfrak{c}$. Therefore $\mathcal{HL}(\text{D}(\R)),\mathcal{HL}(\text{AC}(\R))\geq e_\mathfrak{c}^+$. More recently in \cite{GMMFSS} it was proved that $\mathcal A(\text{J}(\R))=e_\mathfrak{c}$. Thus $\mathcal{HL}(\text{J}(\R))\geq e_\mathfrak{c}^+$. On the other hand, by the result of \cite{GM},  $\mathcal{L}(\text{J}(\R))=(2^\mathfrak{c})^+$. Since $\text{J}(\R)\subseteq\text{PES}(\R)\subseteq\text{SES}(\R)\subseteq\text{ES}(\R)\subseteq\text{D}(\R)$, then additivity number for the classes $\text{PES}(\R),\text{SES}(\R),\text{ES}(\R)$ is $e_\mathfrak{c}$ while their lineability number is largest possible. Since in some model of ZFC we have $e_\mathfrak{c}<2^\mathfrak{c}$, our method does not give optimal solution for lineability number in this cases.  

It was proved in \cite{CR} that $\mathcal A(\text{Ext}(\R))=\mathcal A(\text{PR}(\R))=\mathfrak{c}^+$. Thus $\mathcal{HL}(\text{Ext}(\R)),\mathcal{HL}(\text{Ext}(\R))\geq \mathfrak{c}^{++}$. These two classes were not considered in the context of lineability. 

In \cite{CN} it was proved that $\mathcal A(\text{SZ}(\R))=d_\mathfrak{c}$. Thus $\mathcal{HL}(\text{SZ}(\R))\geq d_\mathfrak{c}^{+}$. In \cite{BGPS} it was shown that $\text{SZ}(\R)$ is $\kappa$-lineable if there exists a family of cardinality $\kappa$ consisting of almost disjoint subsets of $\mathfrak c$. On the other hand in \cite{GMSS} the authors proved that if there is no family of cardinality $\kappa$ consisting of almost disjoint subsets of $\mathfrak c$, then $\text{SZ}(\R)$ is not $\kappa$-lineable. In \cite{CN} the authors proved that it is consistent with ZFC+CH and $\mathcal A(\text{SZ}(\R))=\mathfrak{c}^+<2^\mathfrak{c}$. However, if CH holds, then there is a family of cardinality $2^\mathfrak{c}$ consisting of almost disjoint subsets of $\mathfrak c$ and therefore $\text{SZ}(\R)$ is $2^\mathfrak{c}$-lineable. Consequently, as in previous examples, consistently $\mathcal A(\text{SZ}(\R))^+<\mathcal L(\text{SZ}(\R))$.

It was proved in \cite{CR} that  $\mathcal A(\text{PC}(\R))=2^\mathfrak{c}$. Therefore $\mathcal{HL}(\text{PC}(\R))= (2^\mathfrak{c})^{+}$ is the largest possible. Note that Darboux functions are peripherally continuous. Since the set of all functions which everywhere discontinuous and Darboux are strongly $2^\mathfrak{c}$-algebrable, see \cite{BGPa}, then so is $\text{PC}(\R)$, which is much stronger property that $2^\mathfrak{c}$-lineability.  

If $V$ is a vector space over $\mathbb K$ with $\vert\mathbb K\vert=\mu\geq\omega$, $\mathcal F$ is star-like, $\mathcal F\subseteq V$ and $\mathcal A(\mathcal F)=\kappa>\mu$, then $\mathcal F\cup\{0\}=\bigcup\mathcal B$, where every member $B$ of $\mathcal B$ is a linear space of dimension $\dim B\geq\kappa$. It follows from the fact that for any $f\in\mathcal F\setminus\{0\}$ and any maximal vector space $B$ contained in $\mathcal F$ such that $[f]\subseteq B$ by Theorem \ref{generalThm} we have $\dim B\geq\kappa$.  

\begin{theorem}\label{BIsLarge}
Let $\mathcal F\subseteq V$ be such that $\mathcal A(\mathcal F)=\kappa>\vert\mathbb K\vert$. Assume that there is a vector space $X\subseteq V$ such that $X\cap\mathcal F\subseteq\{0\}$ with $\dim X=\tau\leq\kappa$. Then $\mathcal B$ contains at least $\tau$ many pairwise distinct elements. 
\end{theorem}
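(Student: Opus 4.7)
My plan is to exhibit an injection from a basis of $X$ into $\mathcal B$: use additivity to translate a basis of $X$ simultaneously into $\mathcal F$, and then leverage the hypothesis $X\cap\mathcal F\subseteq\{0\}$ to force distinctness of the resulting maximal subspaces. Concretely, fix a basis $\{x_\alpha:\alpha<\tau\}$ of $X$. If $\tau<\kappa$, then $\vert\{x_\alpha:\alpha<\tau\}\vert=\tau<\kappa=\mathcal A(\mathcal F)$, so the definition of additivity provides $\varphi\in V$ with $\varphi+x_\alpha\in\mathcal F$ for every $\alpha<\tau$. Since $\mathcal F\cup\{0\}=\bigcup\mathcal B$, I pick $B_\alpha\in\mathcal B$ containing $\varphi+x_\alpha$.

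The crucial step is the injectivity of the assignment $\alpha\mapsto B_\alpha$. If $B_\alpha=B_\beta$ for some $\alpha\neq\beta$, then both $\varphi+x_\alpha$ and $\varphi+x_\beta$ lie in the linear subspace $B_\alpha$, whence $x_\alpha-x_\beta\in B_\alpha\subseteq\mathcal F\cup\{0\}$; but $x_\alpha-x_\beta\in X\setminus\{0\}$ by linear independence, contradicting $X\cap\mathcal F\subseteq\{0\}$. This yields $\vert\mathcal B\vert\geq\tau$. For the remaining case $\tau=\kappa$ I apply the same argument to every subspace $X'\subseteq X$ of dimension $\tau'<\kappa$, obtaining $\vert\mathcal B\vert\geq\tau'$, and then pass to the supremum over $\tau'$; this already settles the claim when $\kappa$ is a limit cardinal.

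The main obstacle is the successor case $\tau=\kappa=\lambda^+$, in which the usable $\tau'$ only reach $\lambda$, so every subspace of $X$ accessible through the argument has cardinality $\max(\lambda,\mu)=\lambda$ and the direct method yields only $\vert\mathcal B\vert\geq\lambda$ rather than $\lambda^+$. To close this gap I expect to run a transfinite induction of length $\lambda^+$: at stage $\alpha$, pick a set $F_\alpha$ of size at most $\lambda$ (containing $x_\alpha$ together with auxiliary witnesses tailored to the previously produced $B_\beta$, $\beta<\alpha$), apply additivity to $F_\alpha$ to obtain $\varphi_\alpha$, and then argue that the maximal $B_\alpha$ containing $\varphi_\alpha+x_\alpha$ is forced to differ from every earlier $B_\beta$.
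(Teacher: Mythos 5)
Your argument for $\tau<\kappa$, and hence for $\tau=\kappa$ a limit cardinal, is correct and is essentially the paper's own: the paper writes $X$ as an increasing union of subspaces $X_\xi$ with $\dim X_\xi=\vert\xi\vert$, translates each whole $X_\xi$ into $\mathcal F$ by additivity, and notes that two points of a single translate $\varphi_\xi+X_\xi$ cannot lie in the same $B\in\mathcal B$ because their difference would be a nonzero element of $X$ inside $\mathcal F\cup\{0\}$. Whether one translates a basis or a small subspace is immaterial; the injection into $\mathcal B$ is obtained exactly as you describe.

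The genuine gap is the one you flag yourself: when $\tau=\kappa=\lambda^+$ your method only reaches $\vert\mathcal B\vert\geq\lambda$, and your final paragraph is a plan, not a proof. Moreover the plan does not obviously work: additivity hands you some $\varphi_\alpha$ with $\varphi_\alpha+F_\alpha\subseteq\mathcal F$ but gives no control over which member of $\mathcal B$ each point $\varphi_\alpha+f$ lands in, so a witness $w$ with $\varphi_\alpha+x_\alpha+w\in\mathcal F$ need not lie in the same $B$ as $\varphi_\alpha+x_\alpha$, and there is no evident mechanism forcing $B_\alpha\neq B_\beta$ for earlier $\beta$. You should know, however, that the paper's published proof of this case is precisely the increasing-union argument and simply asserts ``consequently $\vert\mathcal B\vert\geq\kappa$''; since the translating elements $\varphi_\xi$ vary with $\xi$, the assignment $x\mapsto B$ is only well defined for a fixed $\xi$, and as written the argument also yields only $\vert\mathcal B\vert\geq\sup_{\xi<\kappa}\vert\xi\vert$, which is $\lambda$ rather than $\lambda^+$ when $\kappa=\lambda^+$. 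So you have isolated exactly the point at which the paper's own argument is incomplete; to claim the theorem in full you would need either a genuinely new idea for the successor case or a weakened conclusion there.
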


\begin{proof} Assume first that $\dim X=\kappa$
Let $X=\bigcup_{\xi<\kappa}X_\xi$ be such that $X_\xi\subseteq X_{\xi'}$ provided $\xi<\xi'$ and $X_\xi$ is a linear space with $\dim X_\xi=\vert\xi\vert$ for every $\xi<\kappa$. Since $\vert X_\xi \vert<\mathcal A(\mathcal F)$, there is $\varphi_\xi\in\mathcal F$ with $\varphi_\xi+X_\xi\subseteq\mathcal F$. Take any two distinct elements $x,y\in X$. There is $\xi$ such that $x,y\in X_\xi$ and $\varphi_\xi+x,\varphi_\xi+y\in\mathcal F$. Thus $x-y\in X$. Hence $x,y$ are not in the same $B$ from $\mathcal B$. Consequently $\vert\mathcal B\vert\geq\kappa$. If $\dim X<\kappa$, the proof is similar and a bit simpler. 
\end{proof}

The fact that $\mathcal F\cup\{0\}$ can be represented as a union of at least $\tau$ linear spaces, each of dimension at least $\kappa$ we denote by saying that $\mathcal F$ has property $B(\kappa,\tau)$. Surprisingly families of strange function defined by non-linear properties can be written as unions of large linear spaces. 

\begin{corollary}
\begin{enumerate}
\item $\on{D}(\R)$, $\on{ES}(\R)$, $\on{SES}(\R)$, $\on{PES}(\R)$ and $\on{J}(\R)$ have $B(e_{\mathfrak c},e_{\mathfrak c})$.
\item $\on{AC}(\R)$ has $B(e_{\mathfrak c},\mathfrak c)$.
\item $\on{PC}(\R)$ has $B(2^\mathfrak{c},\mathfrak c)$.
\item $\on{PR}(\R)$ have $B(\mathfrak c^+,\mathfrak c^+)$.
\item $\on{SZ}(\R)$ has $B(d_\mathfrak{c},d_\mathfrak{c})$.
\end{enumerate} 
\end{corollary}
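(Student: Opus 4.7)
My plan is to obtain each of the five items as a direct application of Theorem \ref{BIsLarge}. The cardinal $\kappa=\mathcal A(\mathcal F)$ for each class was recalled at the start of this section — $e_\mathfrak c$ in (1) and (2), $2^\mathfrak c$ in (3), $\mathfrak c^+$ in (4), $d_\mathfrak c$ in (5) — and each of these exceeds $|\R|=\mathfrak c$, so the cardinality hypothesis of Theorem \ref{BIsLarge} is satisfied. Unwinding the definition of property $B(\kappa,\tau)$ together with the remark preceding that theorem, the task reduces, in every case, to exhibiting an $\R$-linear subspace $X\subseteq\R^\R$ with $\dim X=\tau$ and $X\cap\mathcal F\subseteq\{0\}$; Theorem \ref{BIsLarge} then supplies $|\mathcal B|\geq\tau$ pairwise distinct maximal linear subspaces in $\mathcal F\cup\{0\}$, each of dimension at least $\kappa$.

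For item (1), I would exploit the chain of inclusions $\on{J}(\R)\subseteq\on{PES}(\R)\subseteq\on{SES}(\R)\subseteq\on{ES}(\R)\subseteq\on{D}(\R)$ and build a single $e_\mathfrak c$-dimensional space $X$ of non-Darboux functions, which then witnesses all five subcases simultaneously. A concrete candidate is the $\R$-span of characteristic functions $\{\mathds{1}_{A_\alpha}:\alpha<e_\mathfrak c\}$ associated with an independent family $\{A_\alpha\}$ of subsets of $\R$, chosen so that every nontrivial linear combination is a simple function taking at least two distinct values on every open interval, and therefore fails the Darboux property. For items (2)--(5), I would make class-specific choices of $X$: a $\mathfrak c$-dimensional space of non-almost-continuous functions in (2), a $\mathfrak c$-dimensional space of functions failing peripheral continuity at a common fixed point in (3), a $\mathfrak c^+$-dimensional space obtained by a transfinite Hamel-basis extension whose members all lack a perfect road at a common point in (4), and a $d_\mathfrak c$-dimensional space of functions all of which restrict continuously to a fixed uncountable subset of $\R$ in (5). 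The uniform idea is to fix a single class-specific failure mechanism and verify that it is preserved under nonzero $\R$-linear combinations inside the chosen family.

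The main obstacle is exactly this preservation step in the cases where the target dimension $\tau$ strictly exceeds $\mathfrak c$, namely (1), (4), and (5). For (1) it hinges on the availability of an independent family of subsets of $\R$ of size $e_\mathfrak c$ and a short lemma ensuring that linear combinations of the corresponding characteristic functions remain non-constant on every open interval; for (4) it requires a careful transfinite extension of a Hamel basis inside $\R^\R$, all of whose members share the same bad point for the perfect-road property; and for (5) it calls for a selection of $d_\mathfrak c$ many non-SZ functions with one common uncountable continuity set, so that their $\R$-linear combinations inherit that partial continuity. Once these auxiliary spaces $X$ are in place, each item drops out of Theorem \ref{BIsLarge} immediately.
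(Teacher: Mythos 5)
Your high-level reduction is exactly the paper's: by the remark before Theorem \ref{BIsLarge} and the theorem itself, each item follows once one exhibits a linear space $X$ with $\dim X=\tau\le\kappa$ and $X\cap\mathcal F\subseteq\{0\}$, i.e.\ once the complement of $\mathcal F$ is shown to be $\tau$-lineable. Your construction for item (1) is sound: a nontrivial combination of characteristic functions of an independent family is a non-constant function with finite range, hence maps $\R$ onto a disconnected set and is not Darboux (you do not even need non-constancy on every interval), and independent families of size $2^{\mathfrak c}\ge e_{\mathfrak c}$ exist. Item (5) also works via a common continuity set, with one correction: the set must have cardinality $\mathfrak c$, not merely be uncountable, since Sierpi\'nski--Zygmund means discontinuous on every set of size $\mathfrak c$. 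The paper handles ES, SES, PES, J and SZ all at once with the single space $X_1=\{f\in\R^\R: f(x)=0 \text{ for } x\le 0\}$, whose nonzero elements miss some value on $(-1,0)$ and are continuous on the size-$\mathfrak c$ set $(-\infty,0]$.

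The genuine gap is in items (2)--(4), where you explicitly defer the one step that carries all the difficulty: closure of the chosen ``failure mechanism'' under linear combinations. Failing peripheral continuity at a fixed point $x_0$ is not preserved by addition (take $g=-f+h$ with $h$ continuous: both $f$ and $g$ can fail peripheral continuity at $x_0$ while $f+g=h$ does not), and the identical objection defeats ``lacking a perfect road at a common point'' in (4); for (2) you name no mechanism at all. You also misdiagnose where the difficulty lies, attributing it to the cases $\tau>\mathfrak c$, whereas (1) and (5) are the cases your mechanisms actually handle and (2)--(4) are the ones they do not. The paper closes these items by citing ready-made lineable sets: a $2^{\mathfrak c}$-dimensional space of nowhere continuous finite-range functions \cite{GMMFSSContinuityMonthly} disjoint from $\on{D}(\R)$, a $\mathfrak c$-dimensional space of functions with dense sets of jump discontinuities \cite{GKP} disjoint from both $\on{AC}(\R)$ and $\on{PC}(\R)$, and a $2^{\mathfrak c}$-dimensional space inside $\on{PES}(\R)$ \cite{GMMFSSS} disjoint from $\on{PR}(\R)$, since a perfectly everywhere surjective function has no perfect road. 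To complete your argument you would need either to reproduce constructions of that strength or to cite them; the sketches as written would not survive the linear-combination check.
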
  

\begin{proof}
We need only to show that for each of the given families of functions there is a large linear space disjoint from it. This is in fact the same as saying that complements of these families are $\tau$-lineable for an appropriate $\tau$. Note that $X_1=\{f\in\R^\R:f(x)=0$ for every $x\leq 0\}$ is disjoint from $\on{ES}(\R)$, $\on{SES}(\R)$, $\on{PES}(\R)$, $\on{J}(\R)$ and $\on{SZ}(\R)$. Let $X_2$ be a space of dimension $2^{\mathfrak c}$ such that $X_2\setminus\{0\}$ consists of nowhere continuous functions with a finite range. Such a space $X_2$ was constructed in \cite{GMMFSSContinuityMonthly}. Clearly, $X_2$ is disjoint from $\on{D}(\R)$. Let $X_3$ be a linear space of dimension $\mathfrak c$ such that $X_3\setminus\{0\}$ consists of functions which has dense set of jump discontinuities. Such a space $X_3$ was constructed in \cite{GKP}. Then $X_3\cap \on{AC}(\R)=X_3\cap \on{PC}(\R)=\{0\}$. Let $X_4$ be a linear space of dimension $2^{\mathfrak c}$ such that $X_4\setminus\{0\}\subseteq\on{PES}(\R)$. Such a space $X_4$ was constructed in \cite{GMMFSSS}. Then $X_4\cap \on{PR}(\R)=\{0\}$. 
\end{proof}

We end the paper with the list of open questions:\\
1. Is it true that $\mathcal A(\mathcal F)^+\geq\mathcal{HL}(\mathcal F)$ for any family $\mathcal F\subseteq\R^\R$?\\
2. What is are homogeneous lineability numbers of the following families $\text{D}(\R)$, $\text{AC}(\R)$, $\text{PES}(\R)$, $\text{J}(\R)$, $\text{Ext}(\R)$, $\text{PR}(\R)$ and $\text{SZ}(\R)$?\\
3. Are the families $\text{Ext}(\R)$ and $\text{PR}(\R)$ $2^\mathfrak{c}$-lineable in ZFC?\\
4. Are the complements of the families $\text{Ext}(\R)$, $\text{PC}(\R)$ and $\text{AC}(\R)$ $2^\mathfrak{c}$-lineable?



\begin{thebibliography}{a,b,c}
\bibitem{APGS} A. Aizpuru,  C. P\'erez-Eslava, F. J. Garc\'ia-Pacheco,  J. B. Seoane-Sep\'ulveda, \emph{Lineability and coneability of discontinuous functions on $\mathbb{\R}$}, Publ. Math. Debrecen {\bf 72} (2008), no. 1-2, 129--139.
\bibitem{ACPSS} R.M. Aron, J.A. Conejero, A. Peris, J.B. Seoane-Sep\'ulveda, \emph{Uncountably generated algebras of everywhere surjective functions}, Bull. Belg. Math. Soc. Simon Stevin {\bf 17} (2010), no. 3, 571--575.
\bibitem{AGS} R.M. Aron, V.I. Gurariy, J. B. Seoane-Sep\'ulveda, \emph{Lineability and spaceability of sets of functions on $\R$}, Proc. Amer. Math. Soc. {\bf 133} (2005), no. 3, 795--803
\bibitem{APSS} R.M. Aron, D. P\'erez-Garcia, J. B. Seoane-Sep\'ulveda, \emph{Algebrability of the set of non-convergent Fourier series}, Studia Math. {\bf 175} (2006), no. 1, 83--90.
\bibitem{ASS} R.M. Aron, J.B. Seoane-Sep\'ulveda, \emph{Algebrability of the set of everywhere surjective functions on $\C$}, Bull. Belg. Math. Soc. Simon Stevin {\bf 14} (2007), no. 1, 25--31.
\bibitem{BBG} A. Bartoszewicz, M. Bienias, S. G\l\c{a}b, \emph{Independent Bernstein sets and algebraic constructions}, J. Math. Anal. Appl. {\bf 393} (2012), 138--143. 
\bibitem{BGPS}  A. Bartoszewicz, S. G\l \c ab, D. Pellegrino and J. B. Seoane-Sep\'ulveda, \emph{Algebrability, non-linear properties, and special functions}, to appear in Proc. Amer. Math. Soc.
\bibitem{BGPa} A. Bartoszewicz, S. G\l \c ab, A. Paszkiewicz, Large free linear algebras of real and complex functions, Linear Algebra Appl. 438 (2013), 3689--3701. 
\bibitem{BGP} A. Bartoszewicz, S. G\l \c ab, T. Poreda, \emph{On algebrability of nonabsolutely convergent series}, Linear Algebra Appl. {\bf 435} (2011), no. 5, 1025--1028. 
\bibitem{BG} A. Bartoszewicz, S. G\l \c ab, \emph{Strong algebrability of sets of sequences and functions}, Proc. Amer. Math. Soc. 141 (2013), 827--835. 
\bibitem{BG1} A. Bartoszewicz, S. G\l\c{a}b, \emph{Algebrability of conditionally convergent series with Cauchy product}, J. Math. Anal. Appl. {\bf 385} (2012) 693--697. 
\bibitem{BQ} F. Bayart and L. Quarta, \textit{Algebras in sets of queer functions}. Israel J. Math. {\bf 158} (2007), 285--296.
\bibitem{BGo} L. Bernal-Gonz\'alez, Algebraic genericity of strict-order integrability. Studia Math. 199 (2010), no. 3, 279--293.
\bibitem{BGOC} L. Bernal-Gonz\'alez, M. Ord\'onez Cabrera, Spaceability of strict order integrability. J. Math. Anal. Appl. 385 (2012), no. 1, 303--309. 
\bibitem{BCFPS} G. Botelho, D. Cariello, V. V. F\'avaro, D. Pellegrino, and J. B. Seoane-Sep\'ulveda, On very non-linear subsets of continuous functions, arXiv:1212.4395v1.
\bibitem{CMa} K. Ciesielski, A. Maliszewski, Cardinal invariants concerning bounded families of extendable and almost continuous functions. Proc. Amer. Math. Soc. 126 (1998), no. 2, 471--479.
\bibitem{CM} K. Ciesielski, A. W. Miller, Cardinal invariants concerning functions whose sum is almost continuous. Real Anal. Exchange 20 (1994/95), no. 2, 657--672.
\bibitem{CN} K. Ciesielski, T. Natkaniec, Algebraic properties of the class of Sierpiñski-Zygmund functions. Topology Appl. 79 (1997), no. 1, 75--99.
\bibitem{CR} K. Ciesielski, I. Rec\l aw, Cardinal invariants concerning extendable and peripherally continuous functions. Real Anal. Exchange 21 (1995/96), no. 2, 459--472.
\bibitem{GM} J.L. G\'amez-Merino, Large algebraic structures inside the set of surjective functions. Bull. Belg. Math. Soc. Simon Stevin 18 (2011), no. 2, 297--300. 
\bibitem{GMMFSS} J.L. G\'amez-Merino, G.A. Munoz-Fern\'andez, J.B. Seoane-Sep\'ulveda, \emph{Lineability and additivity in $\R^\R$.} J. Math. Anal. Appl. {\bf 369} (2010), no. 1, 265--272. 
\bibitem{GMMFSSS} J.L. G\'amez-Merino, G.A. Munoz-Fern\'andez, V.M. S\'anchez, J.B. Seoane-Sep\'ulveda, \emph{Sierpi\'nski-Zygmund functions and other problems on lineability}, Proc. Amer. Math. Soc. {\bf 138} (11) (2010) 3863--3876.
\bibitem{GMMFSSContinuityMonthly} J.L. G\'amez-Merino, G.A. Munoz-Fern\'andez, J.B. Seoane-Sep\'ulveda, \emph{A characterization of continuity revisited}, Amer. Math. Monthly {\bf 118} (2011), no. 2, 167--170.
\bibitem{GMSS} J. L. G\'amez-Merino, J. B. Seoane-Sep\'ulveda, \emph{An undecidable case of lineability in $\R^\R$}, J. Math. Anal. Appl. 401 (2013), no. 2, 959--962.
\bibitem{GPMSS}  F.J. Garc\'ia-Pacheco, M. Mart\'in, J.B Seoane-Sep\'ulveda, \emph{Lineability, spaceability and algebrability of certain subsets of function spaces}, Taiwanese J. Math. {\bf 13} (2009), no. 4, 1257--1269.
\bibitem{GPPSS} F.J. Garc\'ia-Pacheco, N. Palmberg, J.B Seoane-Sep\'ulveda, \emph{Lineability and algebrability of pathological phenomena in analysis}, J. Math. Anal. Appl. {\bf 326} (2007), 929--939.
\bibitem{GKP} S. G\l\c{a}b, P. Kaufmann, L. Pellegrini, \emph{Spaceability and algebrability of sets of nowhere integrable functions}, Proc. Amer. Math. Soc. 141 (2013), 2025--2037.
\bibitem{GQ} V. I. Gurariy, L. Quarta, On lineability of sets of continuous functions, J. Math. Anal. Appl. 294 (2004), 62--72.
\bibitem{N1} T. Natkaniec, Almost continuity,  Real Anal. Exchange 17 (1991/92), no. 2, 462--520. 
\bibitem{N2} T. Natkaniec, \emph{New cardinal invariants in real analysis}, Bull. Polish Acad. Sci. Math. {\bf 44} (1996), no. 2, 251--256.

\end{thebibliography}
\end{document}